\documentclass[oneside,british]{amsart}
\usepackage[T1]{fontenc}
\usepackage[utf8]{inputenc}
\usepackage{amsthm}
\usepackage{amssymb}
\usepackage{geometry}
\usepackage[authoryear]{natbib}

\makeatletter
\numberwithin{equation}{section}
\numberwithin{figure}{section}

\newcommand{\T}{\mathrm{\scriptscriptstyle T}}

\makeatother

\theoremstyle{plain}
\newtheorem{thm}{\protect\theoremname}
\theoremstyle{definition}
\newtheorem{defn}[thm]{\protect\definitionname}
\theoremstyle{plain}
\newtheorem{lem}[thm]{\protect\lemmaname}
\theoremstyle{remark}
\newtheorem{rem}[thm]{\protect\remarkname}
\theoremstyle{plain}
\newtheorem{prop}[thm]{\protect\propositionname}
\theoremstyle{definition}
\newtheorem{example}[thm]{\protect\examplename}
\usepackage{babel}
\providecommand{\definitionname}{Definition}
\providecommand{\examplename}{Example}
\providecommand{\lemmaname}{Lemma}
\providecommand{\propositionname}{Proposition}
\providecommand{\remarkname}{Remark}
\providecommand{\theoremname}{Theorem}

\begin{document}
\title{Robustness of random-walk Metropolis for steep potentials}
\author{Sam Power}
\address{School of Mathematics, University of Bristol}
\email{sam.power@bristol.ac.uk}
\begin{abstract}
In Markov chain Monte Carlo sampling, light-tailed target distributions
present something of a poisoned chalice: their light tails offer good
confinement, and tend to imply good mixing properties for natural
continuous-time dynamics, but the steepness of their tail decay means
that they often fall outside of the scope of modern quantitative convergence
theory. For usual gradient-based samplers, this reflects a genuine
instability issue, whereby Metropolis acceptance rates can degrade
badly. In this work, we study the gradient-free random-walk Metropolis
sampler, and show that for a wide range of light-tailed targets, the
acceptance probability remains stable for reasonable choices of proposal
variance, from which effective and favourable mixing time estimates
can be deduced. The analysis relies on a simple relationship between
the first and second derivatives of the log-density of the target
distribution.
\end{abstract}

\keywords{Acceptance probability; algorithmic robustness; Markov chain Monte
Carlo; Random-walk Metropolis.}
\maketitle

\section{Introduction}

Random-walk Metropolis (RWM) is among the simplest and most widely
used Markov chain Monte Carlo algorithms. Given a target distribution
\[
\pi\left(\mathrm{d}x\right)\propto\exp\left(-U\left(x\right)\right)\,\mathrm{d}x,\qquad x\in\mathbf{R}^{d},
\]
the algorithm proposes to move from $x$ to $Y=x+\sigma\,Z$, for
a symmetric random increment $Z$, and accepts the proposal with probability
\[
\alpha\left(x,Y\right)=\min\left\{ 1,\exp\left(-U\left(Y\right)+U\left(x\right)\right)\right\} .
\]
Iterating this procedure then generates a Markov chain which is $\pi$-reversible
and ergodic under mild assumptions. This simplicity has made RWM a
standard benchmark in both theory and practice. 

Recent work by \citet{andrieu2024explicit,andrieu2026weak} has also
produced increasingly sharp nonasymptotic convergence guarantees.
A useful principle emerging from this literature is that much of the
mixing analysis can be separated into two parts: i) the geometry of
the target, handled through conductance, isoperimetry, or functional
inequalities, and ii) a local algorithmic estimate controlling the
acceptance probability. In particular, once the worst-case acceptance
probability is bounded away from zero, general arguments transfer
geometric information about the target into quantitative convergence
bounds for RWM.

Existing bounds on the worst-case acceptance probability typically
begin with a global modulus of continuity for the gradient of the
potential. One assumes, for example, that for some subadditive $\psi$,
one can bound
\[
\left\Vert \nabla U\left(x\right)-\nabla U\left(y\right)\right\Vert \leq\psi\left(\left\Vert x-y\right\Vert \right),
\]
uniformly over $x,y$. By integration, leads immediately to a state-independent
bound on the Taylor remainder $U\left(x+h\right)-U\left(x\right)-\langle\nabla U\left(x\right),h\rangle$,
and therefore makes it straightforward to select a proposal scale
$\sigma$ for which acceptance remains positive uniformly across the
state space; see Lemma 39 of \citet{andrieu2024explicit} for details.

While user-friendly and rather general, the price of these conventional
assumptions is nevertheless substantial in one key respect. A global,
homogeneous-in-space modulus of continuity $\psi$ prevents $\nabla U$
from growing faster than linearly and hence restricts $U$ to grow
no faster than quadratically. It therefore excludes elementary superquadratic
targets with good confinement and local smoothness such as 
\[
U\left(x\right)=\left(1+\left\Vert x\right\Vert ^{2}\right)^{p/2},\qquad p>2,
\]
for which one certainly expects the RWM to perform well.

For gradient-based algorithms, quadratic growth represents a meaningful
threshold. In the unadjusted Langevin algorithm and in numerical implementations
of Hamiltonian Monte Carlo, rapidly growing derivatives can cause
genuine numerical instability unless the step size is adapted to the
position, or otherwise constrained in more involved ways. Practical
approaches then either argue that this catastrophic non-smoothness
occurs only on a set of negligible mass under $\pi$, and can hence
be systematically ignored, or modify the algorithm directly to induce
stability; see, e.g., \citep{roberts1996exponential,atchade2006adaptive,bou2013nonasymptotic,brosse2019tamed,hodgkinson2021implicit,shukla2026proximal}
for details and instances of each.

It is much less clear that the same limitations should apply to RWM:
the proposal does not involve $\nabla U$, and proposals moving towards
regions of lower potential are accepted automatically. Far from the
centre of a superquadratic target, outward proposals may indeed have
very small acceptance probabilities, but their inward counterparts
typically have very large ones. Earlier qualitative results about
the long-time behaviour of RWM \citep{jarner2000geometric} indicate
that geometric ergodicity can indeed persist in some of these settings.
One therefore wonders whether low acceptance probabilities in unfavourable
directions obstruct uniform control of the average acceptance probability. 

The purpose of this article is to develop an acceptance analysis adapted
to this setting, working with assumptions which allow for much more
rapid growth of the derivatives of $U$, provided that they are self-bounding
in a suitable sense. Our working assumption will take the form of
a \emph{curvature-{}-force profile} estimate 
\[
\left\Vert \nabla^{2}U\left(x\right)\right\Vert _{\mathrm{op}}\leq G\left(\left\Vert \nabla U\left(x\right)\right\Vert \right),
\]
with $G$ some explicit nondecreasing function, typically chosen to
be concave, following work of \citet{ZhangHSJ20,li2023convex} in
the optimization community. This accounts for potentials $U$ of arbitrary
polynomial growth, exponential growth, and much in between. We will
see that under this assumption, acceptance rates can be controlled
rather reasonably.

The key argument then involves exhibiting that under such a smoothness
condition, for each proposal increment $z$, upon considering the
destinations $x\pm\sigma\,z$, while the associated energy increments
$U\left(x\pm\sigma\,z\right)-U\left(x\right)$ are capable of being
quite large indeed, it cannot be the case that \emph{both} proposals
are simultaneously strongly unfavourable, at least on the relevant
proposal scale. This gives a direct, nonasymptotic lower bound on
the average acceptance probability for the RWM proposal, uniformly
over the current state. In various worked examples, the resulting
proposal scales will be seen to depend inverse-polynomially on the
dimension and on the parameters of the curvature-force profile, leading
to overall complexity guarantees which scale polynomially with the
dimension. In the model case $U\left(x\right)\asymp\left\Vert x\right\Vert ^{p}$,
$p>2$, our results indicate stable behaviour upon taking $\sigma\asymp d^{-(p-1)/p}$,
which gracefully reduces to the usual $d^{-1/2}$ scaling in the quadratic
case. Superquadratic growth of the potential thus changes the appropriate
scale of the random walk, but it does not preclude uniform acceptance
for reasonably scaled proposals.

Combined with existing conductance and isoperimetric results, these
estimates substantially enlarge the class of targets for which quantitative
RWM convergence guarantees are available. The framework includes superquadratic
radial potentials, suitable nonradial and nonconvex perturbations,
and more general models whose curvature is controlled as a function
of their gradient. Importantly, convexity plays no essential role
in controlling the acceptance probabilities, which are governed by
quantities that are local and one-sided. Assumptions governing the
global geometry and mixing of the target can be treated separately,
and are in principle largely decoupled from these issues around acceptance
probabilities.

Beyond the resulting bounds for RWM, this analysis highlights a basic
distinction between zeroth-order and first-order algorithms. Rapid
growth of the gradient or Hessian can create an intrinsic stability
problem for discretization of dynamical systems which explicitly use
derivatives. By avoiding gradient information, the random-walk proposal
also avoids that instability. For RWM, the role of increasing steepness
is therefore largely to make one half of a symmetric proposal pair
easier to accept, with the other half then either favourable or irrelevant.
The quadratic-growth boundary encountered in earlier quantitative
RWM analyses based on globally Lipschitz gradients is therefore not
an intrinsic limitation of the algorithm, but rather results from
working only with a state-independent Taylor-remainder estimate. By
permitting the curvature to grow with the local force, and retaining
rather than discarding the favourable first-order contribution, one
obtains state-uniform acceptance bounds for a substantially broader
class of steep potentials.

\section{Preliminaries}\label{sec:preliminaries}

\subsection{Notation}

Throughout, $\left\Vert \cdot\right\Vert $ denotes the Euclidean
norm on $\mathbf{R}^{d}$ and the corresponding operator norm on matrices
when appropriate. We write $a_{+}:=\max\left\{ a,0\right\} $, $a\wedge b:=\min\left\{ a,b\right\} $,
$a\vee b:=\max\left\{ a,b\right\} $. The standard Gaussian distribution
on $\mathbf{R}^{d}$ is denoted by $\mathcal{N}\left(0,\mathbf{I}_{d}\right)$,
and $\Phi$ denotes the distribution function of a one-dimensional
standard Gaussian. All expectations and probabilities involving $Z$,
unless otherwise indicated, refer to $Z\sim\mathcal{N}\left(0,\mathbf{I}_{d}\right)$.

Let $\pi\left(\mathrm{d}x\right)=\mathcal{Z}^{-1}\exp\left(-U\left(x\right)\right)\,\mathrm{d}x$
be a probability distribution on $\mathbf{R}^{d}$, where $U\colon\mathbf{R}^{d}\to\mathbf{R}$
is twice continuously differentiable and $\mathcal{Z}=\int_{\mathbf{R}^{d}}\exp\left(-U\left(x\right)\right)\,\mathrm{d}x<\infty$.
Most of the arguments below only require that $\nabla U$ be locally
absolutely continuous along line segments and that the relevant Hessian
bounds hold almost everywhere; we work with a $C^{2}$ assumption
throughout to avoid inessential regularity issues.

\subsection{Random-walk Metropolis}

For a proposal scale $\sigma>0$, the Gaussian random-walk proposal
from $x\in\mathbf{R}^{d}$ is 
\[
Y=x+\sigma\,Z,\qquad Z\sim\mathcal{N}\left(0,\mathbf{I}_{d}\right).
\]
Define the energy increment $\Delta_{\sigma}\left(x,z\right):=U\left(x+\sigma z\right)-U\left(x\right)$
and the corresponding acceptance function 
\[
\alpha_{\sigma}\left(x,z\right):=1\wedge\exp\left(-\Delta_{\sigma}\left(x,z\right)\right)=\exp\left(-\left[\Delta_{\sigma}\left(x,z\right)\right]_{+}\right).
\]
The average acceptance probability at $x$ and its worst-case value
are, respectively,
\begin{align*}
\alpha_{\sigma}\left(x\right) & :=\mathbf{E}\left(\alpha_{\sigma}\left(x,Z\right)\right),\\
\underline{\alpha}_{\sigma} & :=\inf_{x\in\mathbf{R}^{d}}\alpha_{\sigma}\left(x\right).
\end{align*}

The results of \citet{andrieu2024explicit,andrieu2026weak} establish
the following general result about the mixing time of well-tuned random-walk
Metropolis kernels. We state the result somewhat informally and defer
the precise definitions to these original papers, on the basis that
they are inessential for the present developments.

\textbf{Informal principle 1.} Let $\pi$ be a probability measure
on $\mathbf{R}^{d}$, and let $\pi$ admit isoperimetric constant
$\kappa>0$. Let $P$ denote the random-walk Metropolis kernel with
proposal variance $\sigma^{2}$ and target $\pi$, and assume that
uniformly over $x\in\mathbf{R}^{d}$, the acceptance rate out of the
state $x$ is lower-bounded by a positive numerical constant $\alpha_{0}$.
Then $P$ admits a positive conductance $\mathsf{Cond}\left(P\right)$
and a positive spectral gap $\mathsf{Gap}\left(P\right)$ satisfying
\[
\mathsf{Cond}\left(P\right)\gtrsim\sigma\,\kappa,\qquad\mathsf{Gap}\left(P\right)\gtrsim\sigma^{2}\,\kappa^{2}.
\]
For additional details, consult Theorem 18 of \citet{andrieu2024explicit}.

\textbf{Informal principle 2.} Let $\pi$ be a probability measure
on $\mathbf{R}^{d}$, and let $\widetilde{I}_{\pi}$ be a regular
isoperimetric minorant for $\pi$. Let $P$ denote the random-walk
Metropolis kernel with proposal variance $\sigma^{2}$ and target
$\pi$, and assume that uniformly over $x\in\mathbf{R}^{d}$, the
acceptance rate out of the state $x$ is lower-bounded by a positive
numerical constant $\alpha_{0}$. Let $\mu$ be an initial distribution
for which $\chi^{2}\left(\mu,\pi\right)=:u_{0}<\infty$. Then, in
order to achieve $\chi^{2}\left(\mu P^{n},\pi\right)\in\mathcal{O}\left(1\right)$,
it is essentially sufficient to take 
\[
n\gtrsim\frac{1}{\sigma^{2}}\,\int^{1/4}_{2\,u^{-1}_{0}}\frac{\xi}{\widetilde{I}_{\pi}\left(\xi\right)^{2}}\,\mathrm{d}\xi,
\]
provided that the initial divergence $u_{0}$ is not impractically
large; see Section 5.1 of \citet{andrieu2024explicit} for the precise
statement.

The operational interpretation of these results is that once one is
able to tune the step-size $\sigma$ so that the worst-case acceptance
rate is lower-bounded by a constant such as $1/4$, it follows that
both the mixing time of the chain to an $\mathcal{O}\left(1\right)$
neighbourhood of the target distribution and the relaxation properties
close to equilibrium, as summarized by the conductance and spectral
gap, are characterized well by the step-size, and by algorithm-agnostic
features of the target distribution, i.e. the isoperimetric constant
and profile. On this basis, we focus the remainder of our discussion
precisely on establishing that this control on the acceptance rate
can be achieved, and defer the rather involved and more application-specific
isoperimetric calculations entirely.

Our analysis here focuses exclusively on the question of whether the
acceptance rate can be lower-bounded uniformly in $x$, which is conservative
by nature. In certain situations where typical smoothness properties
are much better than the worst case, more refined analyses based on
the so-called $s$-conductance can allow for sharper mixing time bounds;
we omit further discussion of this point.

\subsection{Curvature-force profiles}

We next formalize the smoothness condition used throughout the paper,
essentially as introduced by \citet{li2023convex}.
\begin{defn}
\label{def:curvature-growth} Let $G\colon\left[0,\infty\right)\to\left[0,\infty\right)$
be continuous and nondecreasing. We say that $U$ is $G$-smooth if,
for every $x\in\mathbf{R}^{d}$, 
\[
\left\Vert \nabla^{2}U\left(x\right)\right\Vert _{\mathrm{op}}\leq G\left(\left\Vert \nabla U\left(x\right)\right\Vert \right).
\]
We call $G$ a curvature-force profile for $U$.
\end{defn}

We will generally take $G$ to be concave; this ensures, in particular,
that $G$ has at most affine growth and that various functions introduced
in the sequel are plainly well-defined globally. Concavity per se
is convenient rather than fundamental; we will see that the arguments
apply to any nondecreasing profile for which all derived quantities
and functions are of sufficiently moderate growth.

A key special case is the following, studied by \citet{ZhangHSJ20}.
\begin{defn}
\label{def:L0L1} For $L_{0},L_{1}\geq0$, we say that $U$ is $\left(L_{0},L_{1}\right)$-smooth
if, for every $x\in\mathbf{R}^{d}$, 
\[
\left\Vert \nabla^{2}U\left(x\right)\right\Vert _{\mathrm{op}}\leq L_{0}+L_{1}\left\Vert \nabla U\left(x\right)\right\Vert .
\]
\end{defn}

Ordinary $L$-smoothness is the case $L_{0}=L$, $L_{1}=0$. In principle,
for a given $U$, one might be able to verify this condition for an
entire Pareto frontier of values $\Lambda\ni\left(L_{0},L_{1}\right)$;
in this case, minimizing the right-hand side of the above comparison
along this frontier then amounts to $G$-smoothness with $G_{\Lambda}\left(r\right):=\inf\left\{ L_{0}+L_{1}\,r:\left(L_{0},L_{1}\right)\in\Lambda\right\} $
concave by construction. Conversely, any suitable concave $G$ admits
an affine majorant of this form.

\subsection{The scalar comparison flow}

Let $G$ be a curvature-force profile. For $s,r\geq0$, let $\Gamma\left(r,s\right)$
denote the maximal nonnegative solution at time $r$ of the evolution
equation
\[
\frac{\partial}{\partial r}\Gamma\left(r,s\right)=G\left(\Gamma\left(r,s\right)\right),\qquad\Gamma\left(0,s\right)=s,
\]
i.e. the flow map for the ODE $\dot{\gamma}=G\left(\gamma\right)$,
initialized at $\gamma\left(0\right)=s$ and run for $r$ units of
time. 

When $G>0$ on $\left[s,\Gamma\left(r,s\right)\right]$, this is equivalently
characterized by the integral formula $\int^{\Gamma\left(r,s\right)}_{s}\mathrm{d}u/G\left(u\right)=r$.
The maximal-solution convention also covers profiles with $G\left(0\right)=0$,
such as the power profiles used in later examples. The flow map $\left(r,s\right)\mapsto\Gamma\left(r,s\right)$
is nondecreasing in both variables and satisfies the semigroup identity
$\Gamma\left(r+t,s\right)=\Gamma\left(r,\Gamma\left(t,s\right)\right)$. 

The reason for introducing this flow is the following elementary comparison
estimate.
\begin{lem}
\label{lem:gradient-comparison} Suppose that $U$ is $G$-smooth.
Then, for every $x,h\in\mathbf{R}^{d}$, it holds that 
\[
\left\Vert \nabla U\left(x+h\right)\right\Vert \leq\Gamma\left(\left\Vert h\right\Vert ,\left\Vert \nabla U\left(x\right)\right\Vert \right).
\]
\end{lem}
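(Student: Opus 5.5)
We restrict to the segment from $x$ to $x+h$ and compare the growth of the gradient norm along it with the scalar ODE $\dot\gamma=G(\gamma)$. If $h=0$ the claim is $\Vert\nabla U(x)\Vert\le\Gamma(0,\Vert\nabla U(x)\Vert)$, which is trivial, so assume $h\neq0$.

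\textbf{Step 1 (reduction to a scalar function).} Set $e:=h/\Vert h\Vert$ and $g(t):=\Vert\nabla U(x+te)\Vert$ for $t\in[0,\Vert h\Vert]$. Since $U\in C^{2}$, the map $t\mapsto\nabla U(x+te)$ is $C^{1}$ with derivative $\nabla^{2}U(x+te)\,e$. Its norm $g$ is therefore Lipschitz on the compact interval, hence absolutely continuous. By the reverse triangle inequality,
\[
\left|g(t')-g(t)\right|\le\int_{t}^{t'}\left\Vert \nabla^{2}U(x+ue)\,e\right\Vert \,\mathrm{d}u\le\int_{t}^{t'}G\left(g(u)\right)\mathrm{d}u,\qquad t\le t'.
\]
This uses $\Vert e\Vert=1$ and $G$-smoothness. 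Thus $g$ satisfies the integral inequality $g(t)\le g(0)+\int_{0}^{t}G(g(u))\,\mathrm{d}u$, with $g(0)=s:=\Vert\nabla U(x)\Vert$.

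\textbf{Step 2 (comparison with the maximal solution).} This is the main obstacle. $G$ is only continuous and nondecreasing, not Lipschitz, so uniqueness may fail and the usual Gr\"onwall argument is unavailable. This happens, for example, for power profiles with $G(0)=0$. The maximal-solution convention is what repairs this. I would use the monotonicity of $G$ together with a strict-perturbation argument.

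For $\varepsilon>0$, let $\gamma_{\varepsilon}$ be a solution of $\dot\gamma_{\varepsilon}=G(\gamma_{\varepsilon})+\varepsilon$ with $\gamma_{\varepsilon}(0)=s+\varepsilon$. Peano's theorem gives existence, and global existence on $[0,\Vert h\Vert]$ follows from the at most affine growth of concave $G$, or is assumed as part of well-posedness of $\Gamma$. Define $t^{*}:=\inf\{t:g(t)\ge\gamma_{\varepsilon}(t)\}$ and suppose it is finite. Then $g(t^{*})=\gamma_{\varepsilon}(t^{*})$, $t^{*}>0$, and $g\le\gamma_{\varepsilon}$ on $[0,t^{*}]$. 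Since $G$ is nondecreasing,
\[
g(t^{*})\le s+\int_{0}^{t^{*}}G(g)\le s+\int_{0}^{t^{*}}G(\gamma_{\varepsilon})<s+\varepsilon+\int_{0}^{t^{*}}\left(G(\gamma_{\varepsilon})+\varepsilon\right)=\gamma_{\varepsilon}(t^{*}).
\]
This is a contradiction, so $g<\gamma_{\varepsilon}$ throughout.

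\textbf{Step 3 (passing to the limit).} By monotonicity of $G$ and the same crossing argument, the family $\gamma_{\varepsilon}$ is monotone in $\varepsilon$. It is also equicontinuous on $[0,\Vert h\Vert]$, since $G$ is bounded on bounded sets. Hence $\gamma_{\varepsilon}\downarrow\gamma_{0}$ as $\varepsilon\downarrow0$, and $\gamma_{0}$ solves $\dot\gamma=G(\gamma)$ with $\gamma_{0}(0)=s$; this follows by passing to the limit in the integral equation using continuity of $G$ and dominated convergence. By maximality, $\gamma_{0}\le\Gamma(\cdot,s)$. Therefore
\[
\Vert\nabla U(x+h)\Vert=g(\Vert h\Vert)\le\gamma_{0}(\Vert h\Vert)\le\Gamma(\Vert h\Vert,s).
\]

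Alternatively, when $G>0$ on the relevant range, one can bypass the perturbation and use the integral characterization of $\Gamma$ directly. On intervals where $g$ is increasing, set $\Psi(u):=\int_{s}^{u}\mathrm{d}v/G(v)$; then $\frac{\mathrm{d}}{\mathrm{d}t}\Psi(g(t))\le1$ almost everywhere, so $\Psi(g(t))\le t$. I would present the perturbation argument, since it also covers the degenerate case $G(0)=0$.
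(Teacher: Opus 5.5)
Your proof is correct and follows the same route as the paper: restrict to the segment from $x$ to $x+h$, use $G$-smoothness to bound the growth of $\Vert\nabla U\Vert$ along it by $G$ of itself, and compare with the scalar flow $\dot\gamma=G(\gamma)$. The paper simply cites general comparison principles for scalar ODEs at the last step. You instead prove the needed comparison explicitly, via the strict $\varepsilon$-perturbation and the limiting construction of the maximal solution, which validly fills in that step even when $G$ is not Lipschitz (e.g.\ $G(0)=0$).
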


\begin{proof}
Set $q\left(t\right)=\left\Vert \nabla U\left(x+t\,h\right)\right\Vert $.
At every point at which $q$ is differentiable, $G$-smoothness supplies
the bound
\[
q'\left(t\right)\leq\left\Vert \nabla^{2}U\left(x+t\,h\right)h\right\Vert \leq\left\Vert h\right\Vert \,G\left(q\left(t\right)\right).
\]
The claim then follows from general comparison principles for scalar
ODEs. 
\end{proof}

To retain the averaging present in the exact Taylor formula, define
\[
\overline{G}\left(r,s\right):=2\,\int^{1}_{0}\left(1-t\right)\,G\left(\Gamma\left(t\,r,s\right)\right)\,\mathrm{d}t.
\]
We refer to $\overline{G}$ as the \emph{averaged} curvature profile.
It satisfies $G\left(s\right)\leq\overline{G}\left(r,s\right)\leq G\left(\Gamma\left(r,s\right)\right)$,
but can be substantially smaller than the upper endpoint $G\left(\Gamma\left(r,s\right)\right)$.
\begin{lem}
\label{lem:taylor-remainder} If $U$ is $G$-smooth, then 
\[
\left|U\left(x+h\right)-U\left(x\right)-\left\langle \nabla U\left(x\right),h\right\rangle \right|\leq\frac{\left\Vert h\right\Vert ^{2}}{2}\,\overline{G}\left(\left\Vert h\right\Vert ,\left\Vert \nabla U\left(x\right)\right\Vert \right).
\]
\end{lem}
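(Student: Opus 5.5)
We will use the integral form of the Taylor remainder along the segment from $x$ to $x+h$, and then apply Lemma~\ref{lem:gradient-comparison} pointwise inside the integral. Set $\phi\left(t\right):=U\left(x+t\,h\right)$ for $t\in\left[0,1\right]$. Since $U$ is $C^{2}$, $\phi$ is $C^{2}$ with $\phi'\left(t\right)=\left\langle \nabla U\left(x+t\,h\right),h\right\rangle $ and $\phi''\left(t\right)=\left\langle h,\nabla^{2}U\left(x+t\,h\right)h\right\rangle $. The exact Taylor formula with integral remainder then reads
\[
U\left(x+h\right)-U\left(x\right)-\left\langle \nabla U\left(x\right),h\right\rangle =\int_{0}^{1}\left(1-t\right)\,\phi''\left(t\right)\,\mathrm{d}t,
\]
so the left-hand side of the claim is at most $\int_{0}^{1}\left(1-t\right)\left|\phi''\left(t\right)\right|\,\mathrm{d}t$.

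Next we bound the integrand. First, $\left|\phi''\left(t\right)\right|\leq\left\Vert h\right\Vert ^{2}\left\Vert \nabla^{2}U\left(x+t\,h\right)\right\Vert _{\mathrm{op}}\leq\left\Vert h\right\Vert ^{2}\,G\left(\left\Vert \nabla U\left(x+t\,h\right)\right\Vert \right)$ by $G$-smoothness. Second, we apply Lemma~\ref{lem:gradient-comparison} with increment $t\,h$, which gives $\left\Vert \nabla U\left(x+t\,h\right)\right\Vert \leq\Gamma\left(t\left\Vert h\right\Vert ,\left\Vert \nabla U\left(x\right)\right\Vert \right)$. Since $G$ is nondecreasing, this yields
\[
\left|\phi''\left(t\right)\right|\leq\left\Vert h\right\Vert ^{2}\,G\left(\Gamma\left(t\left\Vert h\right\Vert ,\left\Vert \nabla U\left(x\right)\right\Vert \right)\right).
\]
Integrating against $\left(1-t\right)$ and comparing with the definition of $\overline{G}$ (with $r=\left\Vert h\right\Vert $, $s=\left\Vert \nabla U\left(x\right)\right\Vert $, and the factor $2$ absorbed as $\tfrac{1}{2}\cdot2$) gives exactly $\frac{\left\Vert h\right\Vert ^{2}}{2}\,\overline{G}\left(\left\Vert h\right\Vert ,\left\Vert \nabla U\left(x\right)\right\Vert \right)$.

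There are two minor points to check. The first is that $t\mapsto G\left(\Gamma\left(t\,r,s\right)\right)$ is measurable, which holds because it is monotone, so the integral defining $\overline{G}$ makes sense. The second is that the maximal solution $\Gamma$ could be $+\infty$ for large $r$ if $G$ grew superlinearly, in which case the bound is trivially true. Under the standing concavity assumption this cannot happen, since $G$ then has at most affine growth and $\Gamma$ is finite. Beyond these checks there is no real obstacle: all of the comparison work is already contained in Lemma~\ref{lem:gradient-comparison}, and the only step needing care is invoking it along the rescaled increment $t\,h$, whose norm is $t\left\Vert h\right\Vert $.
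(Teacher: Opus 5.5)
Your proposal is correct and follows the paper's own proof. The steps are the same: the integral Taylor remainder, the pointwise bound on the Hessian from $G$-smoothness, Lemma~\ref{lem:gradient-comparison} applied to the increment $t\,h$, and monotonicity of $G$, after which integrating against $(1-t)$ gives $\frac{\left\Vert h\right\Vert ^{2}}{2}\,\overline{G}$. Your remarks on measurability and possible blow-up of $\Gamma$ are harmless additions that the paper leaves implicit.
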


\begin{proof}
The integral form of Taylor's theorem gives that
\[
U\left(x+h\right)-U\left(x\right)-\left\langle \nabla U\left(x\right),h\right\rangle =\int^{1}_{0}\left(1-t\right)\,h^{\T}\nabla^{2}U\left(x+t\,h\right)h\,\mathrm{d}t.
\]
Apply then Definition~\ref{def:curvature-growth} and Lemma~\ref{lem:gradient-comparison}. 
\end{proof}

\begin{rem}
In principle, we could commence our analysis directly from a control
of this form: many of the developments which follow will require only
the force-adaptive Taylor bound
\[
U\left(x+h\right)-U\left(x\right)-\left\langle \nabla U\left(x\right),h\right\rangle \leq\frac{\left\Vert h\right\Vert ^{2}}{2}\,\mathsf{L}\left(\left\Vert h\right\Vert ,\left\Vert \nabla U\left(x\right)\right\Vert \right),
\]
for some suitable local smoothness function $\mathsf{L}$, obtained
by whatever means are appropriate. Nevertheless, we formulate our
assumptions in terms of curvature-{}-force profiles because they are
relatively easy to verify and connect explicitly to the prior literature.
For more exotic potentials $U$, the sharpest analyses may instead
benefit from a bespoke expansion of the displayed form.
\end{rem}

For the affine profile $G\left(s\right)=L_{0}+L_{1}\,s$, the preceding
quantities are explicit. If $L_{1}>0$, then one computes that
\begin{align*}
\Gamma\left(r,s\right) & =\left(s+\frac{L_{0}}{L_{1}}\right)\,\exp\left(L_{1}\,r\right)-\frac{L_{0}}{L_{1}}\\
G\left(\Gamma\left(r,s\right)\right) & =\exp\left(L_{1}\,r\right)\,\left(L_{0}+L_{1}\,s\right)\\
\overline{G}\left(r,s\right) & =2\,\varphi\left(L_{1}\,r\right)\,\left(L_{0}+L_{1}\,s\right),
\end{align*}
where $\varphi\left(u\right):=u^{-2}\,\left\{ \exp\left(u\right)-1-u\right\} $
for $u>0$. One checks that for small $u$, $2\,\varphi\left(u\right)=1+u/3+\mathcal{O}\left(u^{2}\right)$
and $\exp\left(u\right)=1+u+\mathcal{O}\left(u^{2}\right)$, i.e.
$\overline{G}\left(r,s\right)\leq G\left(\Gamma\left(r,s\right)\right)$
as anticipated.

\subsection{Antipodal proposals}

The central observation is that, for a symmetric random-walk proposal,
it is enough that one of the two increments $\pm h$ be favourable.
Define the preferred energy increment associated with $z$ at $x$
by
\[
\underbar{\ensuremath{\Delta}}_{\sigma}\left(x,z\right):=\min\left\{ U\left(x+\sigma\,z\right)-U\left(x\right),U\left(x-\sigma\,z\right)-U\left(x\right)\right\} .
\]

\begin{lem}
\label{lem:antipodal-energy} Suppose that $U$ is $G$-smooth. Then
\[
\underbar{\ensuremath{\Delta}}_{\sigma}\left(x,z\right)\leq-\sigma\,\left|\langle\nabla U\left(x\right),z\rangle\right|+\frac{\sigma^{2}\,\left\Vert z\right\Vert ^{2}}{2}\,\overline{G}\left(\sigma\,\left\Vert z\right\Vert ,\left\Vert \nabla U\left(x\right)\right\Vert \right).
\]
\end{lem}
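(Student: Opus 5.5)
The plan is to apply Lemma~\ref{lem:taylor-remainder} twice, once with $h=\sigma z$ and once with $h=-\sigma z$, and then to pick whichever of the two signs makes the first-order term nonpositive. Two features make this work. Both increments have norm $\sigma\left\Vert z\right\Vert$, and the averaged profile $\overline{G}$ depends on the increment only through its norm and through $\left\Vert \nabla U\left(x\right)\right\Vert$. Consequently the second-order remainder bound is literally the same for the two antipodal points. Only the linear term $\pm\sigma\left\langle \nabla U\left(x\right),z\right\rangle$ changes sign.

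Concretely, I would use only the upper half of the two-sided bound in Lemma~\ref{lem:taylor-remainder}. For $\epsilon\in\left\{ -1,+1\right\}$ it gives
\[
U\left(x+\epsilon\sigma z\right)-U\left(x\right)\leq\epsilon\,\sigma\left\langle \nabla U\left(x\right),z\right\rangle +\frac{\sigma^{2}\left\Vert z\right\Vert ^{2}}{2}\,\overline{G}\left(\sigma\left\Vert z\right\Vert ,\left\Vert \nabla U\left(x\right)\right\Vert \right).
\]
Next I would choose $\epsilon=-\mathrm{sign}\left\langle \nabla U\left(x\right),z\right\rangle$, taking either sign if the inner product vanishes. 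With this choice the linear term equals $-\sigma\left|\left\langle \nabla U\left(x\right),z\right\rangle \right|$.

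The minimum defining $\underbar{\ensuremath{\Delta}}_{\sigma}\left(x,z\right)$ is at most the value at this particular $\epsilon$. This yields the claimed inequality directly.

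There is no real obstacle here. The only point needing care is to check that the remainder term in Lemma~\ref{lem:taylor-remainder} is genuinely invariant under $h\mapsto-h$. It is, because $\Gamma$ and $\overline{G}$ are evaluated at $\left\Vert h\right\Vert$ and at the gradient norm at the base point $x$, which is common to both increments.
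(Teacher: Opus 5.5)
Your proposal is correct and is essentially the paper's proof: apply Lemma~\ref{lem:taylor-remainder} to $h=\pm\sigma z$, note that the remainder term is identical for both signs, and choose the sign that makes the linear term equal to $-\sigma\left|\langle\nabla U(x),z\rangle\right|$. The paper phrases this last step as taking the minimum of the two bounds, which amounts to the same thing.
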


\begin{proof}
Apply Lemma~\ref{lem:taylor-remainder} separately to $h=\pm\sigma\,z$
, obtaining
\[
U\left(x\pm\sigma\,z\right)-U\left(x\right)\leq\pm\langle\nabla U\left(x\right),\sigma\,z\rangle+\frac{\sigma^{2}\,\left\Vert z\right\Vert ^{2}}{2}\,\overline{G}\left(\sigma\,\left\Vert z\right\Vert ,\left\Vert \nabla U\left(x\right)\right\Vert \right),
\]
and take the minimum of the two bounds to prove the claim. 
\end{proof}

The corresponding acceptance estimate uses only symmetry.
\begin{lem}
\label{lem:antipodal-acceptance} Let $Z$ have a centrally symmetric
distribution. Then 
\[
\alpha_{\sigma}\left(x\right)\geq1/2\,\mathbf{E}\left(\exp\left(-\left[\underbar{\ensuremath{\Delta}}_{\sigma}\left(x,Z\right)\right]_{+}\right)\right).
\]
Consequently, if $E$ is an event invariant under $Z\mapsto-Z$ and
$\underbar{\ensuremath{\Delta}}_{\sigma}\left(x,z\right)\leq b$ on
$E$, then 
\[
\alpha_{\sigma}\left(x\right)\geq1/2\,\exp\left(-b_{+}\right)\,\mathbf{P}\left(E\right).
\]
\end{lem}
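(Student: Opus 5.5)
The plan is to symmetrise the expectation defining $\alpha_{\sigma}(x)$ and then bound the resulting sum of exponentials from below by its larger term. Since $Z$ and $-Z$ have the same distribution, we can write
\[
\alpha_{\sigma}\left(x\right)=\tfrac{1}{2}\,\mathbf{E}\left(\exp\left(-\left[\Delta_{\sigma}\left(x,Z\right)\right]_{+}\right)+\exp\left(-\left[\Delta_{\sigma}\left(x,-Z\right)\right]_{+}\right)\right).
\]
The integrand is a sum of two nonnegative terms, so it dominates the larger of the two. The map $a\mapsto\exp(-a_{+})$ is nonincreasing, so that larger term equals $\exp\bigl(-[\min\{\Delta_{\sigma}(x,Z),\Delta_{\sigma}(x,-Z)\}]_{+}\bigr)$. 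Noting $\Delta_{\sigma}(x,-z)=U(x-\sigma z)-U(x)$, the minimum inside is exactly $\underbar{\ensuremath{\Delta}}_{\sigma}(x,Z)$. Taking expectations then gives the first inequality. The only point needing care is the pointwise identity $\max\{e^{-a_{+}},e^{-b_{+}}\}=e^{-(a\wedge b)_{+}}$, which follows from monotonicity of $a\mapsto a_{+}$.

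For the consequence, we restrict the expectation in the first bound to the event $E$, which is allowed because the integrand is nonnegative. On $E$ we have $\underbar{\ensuremath{\Delta}}_{\sigma}(x,Z)\leq b$, hence $[\underbar{\ensuremath{\Delta}}_{\sigma}(x,Z)]_{+}\leq b_{+}$, again by monotonicity of $(\cdot)_{+}$. The integrand is therefore at least $\exp(-b_{+})$ on $E$, giving $\alpha_{\sigma}(x)\geq\tfrac12\exp(-b_{+})\,\mathbf{P}(E)$. The invariance of $E$ under $Z\mapsto-Z$ is not actually needed for this inequality. It is natural in context, since $\underbar{\ensuremath{\Delta}}_{\sigma}(x,\cdot)$ is itself an even function of $z$, so events defined through it, or through $|\langle\nabla U(x),z\rangle|$ and $\|z\|$ as in Lemma~\ref{lem:antipodal-energy}, are automatically symmetric. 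I would note this in a line rather than use it.

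There is no real obstacle here. The only step worth writing out is the symmetrisation identity, which uses that $Z\overset{d}{=}-Z$ and that $\alpha_{\sigma}(x,\cdot)$ is bounded and measurable, so all expectations are finite.
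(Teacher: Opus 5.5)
Your proof is correct and matches the paper's argument: symmetrise using that $Z$ and $-Z$ have the same law, bound the sum of the two acceptance terms below by the larger one, which equals $\exp(-[\underbar{\ensuremath{\Delta}}_{\sigma}(x,Z)]_{+})$ by monotonicity, and then restrict the expectation to $E$ for the consequence. Your side remark is also right: the invariance of $E$ under $Z\mapsto-Z$ is not needed for the second inequality.
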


\begin{proof}
By symmetry, 
\begin{align*}
\alpha_{\sigma}\left(x\right) & =1/2\,\left\{ \mathbf{E}\left(\exp\left(-\left[\Delta_{\sigma}\left(x,Z\right)\right]_{+}\right)+\exp\left(-\left[\Delta_{\sigma}\left(x,-Z\right)\right]_{+}\right)\right)\right\} \\
 & \geq1/2\,\mathbf{E}\left(\exp\left(-\left[\underbar{\ensuremath{\Delta}}_{\sigma}\left(x,Z\right)\right]_{+}\right)\right),
\end{align*}
and the first claim follows. The second follows by restricting the
expectation to $E$. 
\end{proof}

This makes one-sided estimates straightforward to apply. Usual bounds
on the absolute Taylor remainder would attempt to control both $U\left(x+h\right)-U\left(x\right)$
and $U\left(x-h\right)-U\left(x\right)$, leaving the analyst at the
mercy of the worst of the two increments. Lemma~\ref{lem:antipodal-energy}
instead controls their minimum, retaining the favourable linear contribution
$-\left|\left\langle \nabla U\left(x\right),h\right\rangle \right|$.

\subsection{A uniform acceptance bound}\label{subsec:tunable-acceptance}

The preceding estimates can be localized as follows. For $r>0$, define
\[
\mathcal{K}_{r,\sigma}\left(G\right):=\frac{r^{2}}{2}\,\sup\left\{ \frac{\overline{G}\left(r,s\right)}{1+\sigma\,s}:s\geq0\right\} .
\]
If $G$ is nonnegative, nondecreasing and concave, then one can verify
directly that this quantity is finite: concavity of $G$ implies a
linear envelope of the form $G\left(s\right)\leq A+B\,s=:G_{\mathrm{lin}}\left(s\right)$,
ODE comparison gives that $\Gamma\left(r,s\right)\leq\Gamma_{\mathrm{lin}}\left(r,s\right)$,
monotonicity then grants that
\[
G\left(\Gamma\left(r,s\right)\right)\leq G_{\mathrm{lin}}\left(\Gamma_{\mathrm{lin}}\left(r,s\right)\right)=\exp\left(B\,r\right)\,\left(A+B\,s\right),
\]
and one thus extracts the bounds
\begin{align*}
\sup\left\{ \frac{\overline{G}\left(r,s\right)}{1+\sigma\,s}:s\geq0\right\}  & \leq\exp\left(B\,r\right)\,\max\left\{ A,\frac{B}{\sigma}\right\} \\
\implies\qquad\mathcal{K}_{r,\sigma}\left(G\right) & \leq\exp\left(B\,r\right)\,\max\left\{ A,\frac{B}{\sigma}\right\} \,\frac{r^{2}}{2}.
\end{align*}
Using this definition, one sees that for $\left\Vert h\right\Vert \leq r$,
it holds that (writing $K=\mathcal{K}_{r,\sigma}\left(G\right)$)
\[
\frac{\left\Vert h\right\Vert ^{2}}{2}\,\overline{G}\left(\left\Vert h\right\Vert ,\left\Vert \nabla U\left(x\right)\right\Vert \right)\leq K\,\left(1+\sigma\,\left\Vert \nabla U\left(x\right)\right\Vert \right),
\]
and hence that when $\left\Vert \sigma\,z\right\Vert \leq r$, the
preferred energy increment is controlled as
\begin{align*}
\underbar{\ensuremath{\Delta}}_{\sigma}\left(x,z\right) & \leq-\sigma\,\left|\left\langle \nabla U\left(x\right),z\right\rangle \right|+K\,\left(1+\sigma\,\left\Vert \nabla U\left(x\right)\right\Vert \right)\\
 & =K+\sigma\,\left\Vert \nabla U\left(x\right)\right\Vert \,\left\{ K-\left|\left\langle \frac{\nabla U\left(x\right)}{\left\Vert \nabla U\left(x\right)\right\Vert },z\right\rangle \right|\right\} ,
\end{align*}
with the obvious interpretation for the latter expression when $\nabla U\left(x\right)=0$.
In effect, finiteness of $K$ at a given scale supplies a non-trivial
ball of position increments $z$ for which the associated energy increments
are not too unfavourable.

For $k\geq1$, let $p_{k}\left(c\right):=\mathbf{P}\left(\chi^{2}_{k}\leq c\,k\right)$,
$p_{0}\left(c\right):=1$. The following proposition separates the
curvature estimate from the Gaussian concentration estimate.
\begin{prop}
\label{prop:tunable-acceptance} Suppose that $U$ is $G$-smooth.
Fix $c>1$ and set $K=\mathcal{K}_{r,\sigma}\left(G\right)$ for $r=\sigma\,\left(c\,d\right)^{1/2}$.
If $K<c^{1/2}$, then
\[
\underline{\alpha}_{\sigma}\geq\exp\left(-K\right)\,p_{d-1}\left(c\right)\,\left\{ \Phi\left(c^{1/2}\right)-\Phi\left(K\right)\right\} .
\]
\end{prop}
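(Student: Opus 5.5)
The plan is to apply the second part of Lemma~\ref{lem:antipodal-acceptance} to a symmetric event on which $\|\sigma Z\|\le r$ and on which the localized bound on $\underbar{\ensuremath{\Delta}}_{\sigma}$ derived just before the proposition gives $\underbar{\ensuremath{\Delta}}_{\sigma}(x,Z)\le K$.

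Fix $x$ and let $e:=\nabla U(x)/\|\nabla U(x)\|$; if $\nabla U(x)=0$, take $e$ to be any fixed unit vector. Decompose $Z=\xi\,e+W$, where $\xi:=\langle e,Z\rangle\sim\mathcal{N}(0,1)$ and $W$ is the projection of $Z$ onto $e^{\perp}$. Then $\|W\|^{2}\sim\chi^{2}_{d-1}$, with the convention $W\equiv0$ when $d=1$, and $W$ is independent of $\xi$. Consider the event
\[
E:=\left\{ \|W\|^{2}\leq c\,(d-1)\right\} \cap\left\{ K\leq|\xi|\leq c^{1/2}\right\} .
\]
The hypothesis $K<c^{1/2}$ ensures that the second constraint is non-vacuous. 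The event $E$ is invariant under $Z\mapsto-Z$, since both $\|W\|$ and $|\xi|$ are even.

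On $E$ we have $\|Z\|^{2}=\xi^{2}+\|W\|^{2}\leq c+c\,(d-1)=c\,d$. Hence $\|\sigma Z\|\leq\sigma\,(c\,d)^{1/2}=r$, so the localized estimate from Section~\ref{subsec:tunable-acceptance} applies with $K=\mathcal{K}_{r,\sigma}(G)$ and gives
\[
\underbar{\ensuremath{\Delta}}_{\sigma}(x,Z)\leq K+\sigma\,\|\nabla U(x)\|\,\{K-|\xi|\}\leq K .
\]
The last inequality holds because $|\xi|\geq K$ on $E$. When $\nabla U(x)=0$ the bracketed term is simply absent, and the bound $\underbar{\ensuremath{\Delta}}_{\sigma}\le K$ still holds. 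Since $K\geq0$, Lemma~\ref{lem:antipodal-acceptance} yields $\alpha_{\sigma}(x)\geq\tfrac12\exp(-K)\,\mathbf{P}(E)$.

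By the independence of $\xi$ and $W$,
\[
\mathbf{P}(E)=p_{d-1}(c)\cdot\mathbf{P}\left(K\leq|\xi|\leq c^{1/2}\right)=p_{d-1}(c)\cdot2\left\{ \Phi(c^{1/2})-\Phi(K)\right\} .
\]
The factor $2$ cancels the $\tfrac12$. The resulting bound does not depend on $x$, so taking the infimum over $x$ gives the claim.

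The only delicate step is arranging that the whole event sits inside the ball $\|\sigma z\|\le r$ where $K$ controls the remainder, while keeping the product structure. Splitting the budget $c\,d$ as $c$ for the gradient direction and $c\,(d-1)$ for the orthogonal complement handles this exactly, and this split is what produces $p_{d-1}(c)$ and the upper limit $c^{1/2}$ in the statement.
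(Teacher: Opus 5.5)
Your proposal is correct and follows essentially the same route as the paper: you decompose $Z$ along $\nabla U(x)/\|\nabla U(x)\|$ and its orthogonal complement, and you restrict to the symmetric event $\{K\le|\xi|\le c^{1/2},\ \|W\|^{2}\le c\,(d-1)\}$. On that event the localized bound gives $\underbar{\ensuremath{\Delta}}_{\sigma}\le K$, and Lemma~\ref{lem:antipodal-acceptance} then yields the claim, with $\mathbf{P}(E)=2\,p_{d-1}(c)\{\Phi(c^{1/2})-\Phi(K)\}$ cancelling the factor $1/2$.
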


\begin{proof}
Decompose $Z=W\,\nabla U\left(x\right)/\left\Vert \nabla U\left(x\right)\right\Vert +Z_{\perp}$
with $W\sim\mathcal{N}\left(0,1\right)$, $\left\Vert Z_{\perp}\right\Vert ^{2}\sim\chi^{2}_{d-1}$,
and the two random variables independent, and consider the event $\mathcal{A}_{c,K}:=\left\{ K\leq\left|W\right|\leq c^{1/2},\quad\left\Vert Z_{\perp}\right\Vert ^{2}\leq c\,\left(d-1\right)\right\} $.
On this event, $\left\Vert Z\right\Vert ^{2}\leq c\,d$ and $\left|\left\langle \nabla U\left(x\right)/\left\Vert \nabla U\left(x\right)\right\Vert ,Z\right\rangle \right|\geq K$,
whereby $\underbar{\ensuremath{\Delta}}_{\sigma}\left(x,Z\right)\leq K$
(with appropriate adaptation when $\nabla U\left(x\right)=0$). The
event is invariant under $Z\mapsto-Z$, and so we are free to apply
Lemma \ref{lem:antipodal-acceptance} and deduce that $\underline{\alpha}_{\sigma}\geq1/2\,\exp\left(-K\right)\,\mathbf{P}\left(\mathcal{A}_{c,K}\right)$.
Independence of $W$ and $\left\Vert Z_{\perp}\right\Vert ^{2}$ then
allows the explicit calculation $\mathbf{P}\left(\mathcal{A}_{c,K}\right)=2\,p_{d-1}\left(c\right)\,\left\{ \Phi\left(c^{1/2}\right)-\Phi\left(K\right)\right\} $,
and we conclude.
\end{proof}

The chi-square factor in Proposition~\ref{prop:tunable-acceptance}
admits standard explicit estimates. A Chernoff bound gives that for
every $k\geq1$ and $c>1$, $1-p_{k}\left(c\right)\leq\exp\left(-k\,I\left(c\right)\right)$,
with $I\left(c\right)=1/2\,\left(c-1-\log c\right)$, i.e. $p_{d-1}\left(c\right)$
is exponentially close to $1$ as $d$ grows. A convenient estimate
which tracks the impact of dimension more explicitly follows from
the Laurent--Massart inequality of \citet{laurent2000adaptive},
which states that
\[
\mathbf{P}\!\left\{ \chi^{2}_{k}>k+2\,\left(k\,u\right)^{1/2}+2\,u\right\} \leq\exp\left(-u\right),\qquad u>0.
\]
For $d\geq2$ and $\delta\in(0,1)$, define $c_{d,\delta}:=1+2\,\left\{ \log\left(1/\delta\right)/(d-1)\right\} ^{1/2}+2\log\left(1/\delta\right)/(d-1)$,
so that $p_{d-1}\left(c_{d,\delta}\right)\geq1-\delta$, and hence
\[
\underline{\alpha}_{\sigma}\geq\left(1-\delta\right)\,\exp\left(-K\right)\,\left\{ \Phi\left(c^{1/2}_{d,\delta}\right)-\Phi\left(K\right)\right\} ,\qquad K=\mathcal{K}^{\left(c_{d,\delta}\right)}_{d,\sigma}\left(G\right),
\]
provided that $K<c^{1/2}_{d,\delta}$.

For example, if $\sigma$ is chosen sufficiently small that $K\leq1/4$,
then, since $c_{d,\delta}>1$, one can bound
\[
\underline{\alpha}_{\sigma}\geq\left(1-\delta\right)\,\exp\left(-1/4\right)\,\left(\Phi\left(1\right)-\Phi\left(1/4\right)\right)>1/6\,\left(1-\delta\right).
\]
The simple criterion $\mathcal{K}^{\left(c_{d,\delta}\right)}_{d,\sigma}\left(G\right)\leq1/4$
thus ensures a dimension-uniform, state-uniform lower bound on the
average acceptance probability.

\section{Examples}\label{sec:examples}

\subsection{General setup}

We illustrate the curvature-force framework on several representative
classes of targets. Throughout this section, fix $c>1$, and write
$\mathcal{K}^{\left(c\right)}_{d,\sigma}\left(G\right)$ for the value
of $\mathcal{K}_{r,\sigma}\left(G\right)$ when $r=\sigma\,\left(c\,d\right)^{1/2}$.
Proposition~\ref{prop:tunable-acceptance} shows that it is enough
to take $\sigma$ sufficiently small that $\mathcal{K}^{\left(c\right)}_{d,\sigma}\left(G\right)$
is smaller than a numerical constant; we use the convenient threshold
$\mathcal{K}^{\left(c\right)}_{d,\sigma}\left(G\right)\leq1/4$ throughout.
For every fixed $c>1$, Proposition~\ref{prop:tunable-acceptance}
and the chi-square Chernoff bound then imply that the worst-case acceptance
probability is bounded away from zero uniformly in $d$. Sharper dimension-dependent
constants can be obtained directly from Proposition~\ref{prop:tunable-acceptance}.

\subsection{Potentials with bounded curvature}

Suppose first that $\left\Vert \nabla^{2}U\left(x\right)\right\Vert _{\mathrm{op}}\leq L$
globally. This corresponds to the constant profile $G\left(s\right)=L$.
Since $\overline{G}\left(r,s\right)=L$, we have the exact expression
$\mathcal{K}^{\left(c\right)}_{d,\sigma}\left(G\right)=\left(c/2\right)\,L\,d\,\sigma^{2}$.
Consequently, $\sigma\leq\left(2\,c\,L\,d\right)^{-1/2}$ implies
that $\mathcal{K}^{\left(c\right)}_{d,\sigma}\left(G\right)\leq1/4$,
and hence gives a uniform lower bound on the acceptance probability.
This recovers the familiar proposal scale $\sigma\asymp\left(L\,d\right)^{-1/2}$,
as advocated in \citet{andrieu2024explicit,andrieu2026weak}. 

\subsection{Power-law curvature profile}

We next consider profiles of the form $G\left(s\right)=L_{0}+L_{\beta}\,s^{\beta}$,
$0<\beta<1$, with $L_{0}\geq0$ and $L_{\beta}>0$. Before turning
to particular potentials, we record a convenient estimate for the
corresponding scalar flow.

Set $\gamma=\beta/(1-\beta)$, $D_{\beta}=2^{1-\beta}\,L_{\beta}$,
$C_{\gamma}=2^{\left(\gamma-1\right)_{+}}$, and $m_{\beta}:=\sup\left\{ u^{\beta}/(1+u):u\geq0\right\} =\beta^{\beta}\,\left(1-\beta\right)^{1-\beta}$.
A specific application of Jensen's inequality gives that
\[
L_{0}+L_{\beta}\,y^{\beta}\leq D_{\beta}\,\left\{ \left(L_{0}/L_{\beta}\right)^{1/\beta}+y\right\} ^{\beta},
\]
and solution of the resulting scalar differential inequality gives
that
\[
G\left(\Gamma\left(r,s\right)\right)\leq D_{\beta}\,C_{\gamma}\,\left(\frac{L_{0}}{L_{\beta}}+s^{\beta}+\left(\left(1-\beta\right)\,D_{\beta}\,r\right)^{\gamma}\right).
\]
Since $\overline{G}\left(r,s\right)\leq G\left(\Gamma\left(r,s\right)\right)$,
it follows that 
\[
\begin{aligned}\mathcal{K}^{\left(c\right)}_{d,\sigma}\left(G\right)\leq & \frac{c\,d\,\sigma^{2}}{2}\,D_{\beta}\,C_{\gamma}\,\left\{ \frac{L_{0}}{L_{\beta}}+m_{\beta}\,\sigma^{-\beta}+\left[\left(1-\beta\right)\,D_{\beta}\,\sigma\,\left(c\,d\right)^{1/2}\right]^{\gamma}\right\} .\end{aligned}
\]
In particular, there is an explicit constant $C_{\beta,c}<\infty$,
depending only on $\beta$ and $c$, such that 
\[
\mathcal{K}^{\left(c\right)}_{d,\sigma}\left(G\right)\leq C_{\beta,c}\,\left(L_{0}\,d\,\sigma^{2}+L_{\beta}\,d\,\sigma^{2-\beta}+\left(L_{\beta}\,d\,\sigma^{2-\beta}\right)^{1/(1-\beta)}\right).
\]
Thus, for a sufficiently small constant $\eta=\eta\left(\beta,c\right)$,
the choice $\sigma=\eta\,\min\left\{ \left(L_{0}\,d\right)^{-1/2},\left(L_{\beta}\,d\right)^{-1/(2-\beta)}\right\} $
ensures that $\mathcal{K}^{\left(c\right)}_{d,\sigma}\left(G\right)\leq1/4$.
\begin{example}
Consider $U_{p}\left(x\right)=p^{-1}\left\Vert x\right\Vert ^{p}$,
$p>2$. For $x\neq0$,
\[
\nabla U_{p}\left(x\right)=\left\Vert x\right\Vert ^{p-2}x,\qquad\nabla^{2}U_{p}\left(x\right)=\left\Vert x\right\Vert ^{p-2}\mathbf{I}_{d}+\left(p-2\right)\,\left\Vert x\right\Vert ^{p-4}xx^{\T}.
\]
 It follows that $\left\Vert \nabla U_{p}\left(x\right)\right\Vert =\left\Vert x\right\Vert ^{p-1}$,
$\left\Vert \nabla^{2}U_{p}\left(x\right)\right\Vert _{\mathrm{op}}=\left(p-1\right)\,\left\Vert x\right\Vert ^{p-2}$,
and therefore 
\[
\left\Vert \nabla^{2}U_{p}\left(x\right)\right\Vert _{\mathrm{op}}=\left(p-1\right)\,\left\Vert \nabla U_{p}\left(x\right)\right\Vert ^{(p-2)/(p-1)},
\]
 i.e. $U$ has polynomial curvature-force profile with exponent $\beta=(p-2)/(p-1)$,
for which $1/(2-\beta)=(p-1)/p$. Thus, for every sufficiently small
$\eta_{p,c}>0$, the proposal scale $\sigma=\eta_{p,c}\,\left(\left(p-1\right)\,d\right)^{-(p-1)/p}$
satisfies the threshold above. In particular, the worst-case acceptance
probability is bounded away from zero uniformly in the state and dimension.
\end{example}

In this example, the dimensional scaling is sharp. To see this, let
$x_{d}=d^{1/p}e_{1}$, $\tau_{d}:=\sigma_{d}d^{(p-1)/p}$, and decompose
$Z=We_{1}+Z_{\perp}$, where $W\sim\mathcal{N}\left(0,1\right)$.
Then 
\[
\begin{aligned}U_{p}\left(x_{d}+\sigma_{d}Z\right)-U_{p}\left(x_{d}\right) & =\frac{d}{p}\left[\left(1+\frac{2\tau_{d}W+\tau^{2}_{d}\left\Vert Z\right\Vert ^{2}/d}{d}\right)^{p/2}-1\right].\end{aligned}
\]
Consequently, if $\tau_{d}\to\tau\in\left[0,\infty\right)$ as $d\to\infty$,
the law of large numbers and a first-order expansion give that $U_{p}\left(x_{d}+\sigma_{d}Z\right)-U_{p}\left(x_{d}\right)\to\tau W+\tau^{2}/2$;
since the Metropolis acceptance function is bounded and continuous,
it follows that $\alpha_{\sigma_{d}}\left(x_{d}\right)\longrightarrow\mathbf{E}\left(1\wedge\exp\left(-\tau W-\tau^{2}/2\right)\right)=2\Phi\left(-\tau/2\right)$.
It thereby follows that scale $\sigma_{d}\asymp d^{-(p-1)/p}$ produces
a non-vanishing acceptance probability, as established.

Conversely, the same limiting argument gives a route towards showing
that this scale cannot be improved while retaining a state-uniform
acceptance bound. Indeed, convexity of $s\mapsto s^{p/2}$ gives the
pointwise estimate $U_{p}\left(x_{d}+\sigma_{d}Z\right)-U_{p}\left(x_{d}\right)\geq\tau_{d}W+\tau^{2}_{d}\left\Vert Z\right\Vert ^{2}/(2d)$,
and so on the event that $\left\{ W\geq-\tau_{d}/8,\quad\left\Vert Z\right\Vert ^{2}\geq d/2\right\} $,
the energy increment is therefore at least $1/8\,\tau^{2}_{d}$. It
follows, for every $d$, 
\[
\alpha_{\sigma_{d}}\left(x_{d}\right)\leq\Phi\left(-\frac{\tau_{d}}{8}\right)+\mathbf{P}\left(\chi^{2}_{d}<\frac{d}{2}\right)+\exp\left(-\frac{\tau^{2}_{d}}{8}\right),
\]
and so if $\tau_{d}\to\infty$ then $\alpha_{\sigma_{d}}\left(x_{d}\right)\to0$,
and hence $\underline{\alpha}_{\sigma_{d}}\leq\alpha_{\sigma_{d}}\left(x_{d}\right)\to0$
as well. The obstruction occurs on the typical radial scale of the
target: under $\pi$, one has $d^{-1}\,\left\Vert X\right\Vert ^{p}\to1$
in probability.

The same curvature-{}-force exponent also appears for non-radial product
targets.
\begin{example}
Consider the separable potential $U^{\mathrm{sep}}_{p}\left(x\right)=p^{-1}\sum^{d}_{i=1}\left|x_{i}\right|^{p}$.
Explicit calculations give that $\left\Vert \nabla^{2}U^{\mathrm{sep}}_{p}\left(x\right)\right\Vert _{\mathrm{op}}=\left(p-1\right)\,\max_{i}\left|x_{i}\right|^{p-2}$,
$\left\Vert \nabla U^{\mathrm{sep}}_{p}\left(x\right)\right\Vert \geq\max_{i}\left|x_{i}\right|^{p-1}$,
and hence 
\[
\left\Vert \nabla^{2}U^{\mathrm{sep}}_{p}\left(x\right)\right\Vert _{\mathrm{op}}\leq\left(p-1\right)\,\left\Vert \nabla U^{\mathrm{sep}}_{p}\left(x\right)\right\Vert ^{(p-2)/(p-1)}
\]
 once more.
\end{example}

In this example, however, the proposal scale prescribed by the general
curvature-{}-force argument is conservative. Classical optimal-scaling
theory for product targets suggests instead taking $\sigma=\ell\,d^{-1/2}$;
see \citep{roberts1997}. This scaling also yields a nonasymptotic,
state-uniform acceptance bound. Indeed, a coordinatewise Taylor estimate
gives that for some explicit $\left(A_{p},B_{p}\right)$, there holds
the bound
\[
\begin{aligned}\underline{\Delta}_{\sigma}\left(x,Z\right) & \leq-\sigma\,\left|\left\langle \nabla U^{\mathrm{sep}}_{p}\left(x\right),Z\right\rangle \right|+R_{p}\left(x,\sigma\,Z\right),\\
R_{p}\left(x,\sigma\,Z\right) & :=A_{p}\,\sigma^{2}\,\sum^{d}_{i=1}\left|x_{i}\right|^{p-2}\,Z^{2}_{i}+B_{p}\,\sigma^{p}\,\sum^{d}_{i=1}\left|Z_{i}\right|^{p}\geq0.
\end{aligned}
\]
Scaling $\sigma=\ell\,d^{-1/2}$, one can consider the event on which
$\left|\left\langle \nabla U^{\mathrm{sep}}_{p}\left(x\right),Z\right\rangle \right|\geq1/2\,\left\Vert \nabla U^{\mathrm{sep}}_{p}\left(x\right)\right\Vert $
and $R_{p}\left(x,\sigma\,Z\right)\leq4\,\mathbf{E}\left(R_{p}\left(x,\sigma\,Z\right)\right)$,
which is both reasonably likely and supplies the control $\underline{\Delta}_{\sigma}\left(x,Z\right)\leq C_{p}\,\ell^{p}$
for some explicit $C_{p}$. Lemma~\ref{lem:antipodal-acceptance}
therefore implies a lower bound of the form $\alpha_{\ell\,d^{-1/2}}\left(x\right)\geq c\,\exp\left(-C_{p}\,\ell^{p}\right)$
for some explicit $c>0$.

In this example, the usual product-target scale successfully controls
the worst-case acceptance probability. The general curvature-{}-force
estimate loses this improvement because its operator-norm bound discards
the effect of averaging across coordinates. Nevertheless, the relatively
crude bound still supplies an estimate with polynomial dependence
on the dimension.

The pure power can also be regularized near the origin without changing
the dimensional scaling.
\begin{example}
The example mentioned in the introduction, $\widetilde{U}_{p}\left(x\right)=\left(1+\left\Vert x\right\Vert ^{2}\right)^{p/2}$,
$p>2$, has 
\begin{align*}
\nabla\widetilde{U}_{p}\left(x\right) & =p\,\left(1+\left\Vert x\right\Vert ^{2}\right)^{(p-2)/2}x.\\
\nabla^{2}\widetilde{U}_{p}\left(x\right) & =p\,\left(1+\left\Vert x\right\Vert ^{2}\right)^{(p-2)/2}\mathbf{I}_{d}+p\,\left(p-2\right)\,\left(1+\left\Vert x\right\Vert ^{2}\right)^{(p-4)/2}xx^{\T}.
\end{align*}
Writing again $\beta=(p-2)/(p-1)$, one obtains the global bound 
\[
\left\Vert \nabla^{2}\widetilde{U}_{p}\left(x\right)\right\Vert _{\mathrm{op}}\leq L_{0,p}+L_{\beta,p}\left\Vert \nabla\widetilde{U}_{p}\left(x\right)\right\Vert ^{\beta},
\]
where one can take, for example, $L_{0,p}=p\,\left(p-1\right)\,2^{(p-2)/2}$,
$L_{\beta,p}=\left(p-1\right)\,p^{1-\beta}\,2^{\beta/2}$. It follows
that for sufficiently small $\eta_{p,c}>0$, the step-size $\sigma=\eta_{p,c}\,\min\left\{ \left(L_{0,p}\,d\right)^{-1/2},\left(L_{\beta,p}\,d\right)^{-(p-1)/p}\right\} $
suffices for uniform acceptance. For fixed $p$, this is of order
$d^{-(p-1)/p}$ as $d\to\infty$. In contrast, treating this potential
only through an affine $\left(L_{0},L_{1}\right)$ bound would produce
the more conservative scale $d^{-1}$.
\end{example}

\subsection{Affine curvature growth and an exponential example}

Suppose that $U$ is $\left(L_{0},L_{1}\right)$-smooth, i.e. 
\[
\left\Vert \nabla^{2}U\left(x\right)\right\Vert _{\mathrm{op}}\leq L_{0}+L_{1}\,\left\Vert \nabla U\left(x\right)\right\Vert .
\]
 From the explicit calculation in Section~\ref{sec:preliminaries},
we obtain that
\[
\mathcal{K}^{\left(c\right)}_{d,\sigma}=c\,\varphi\left(L_{1}\,\sigma\,\left(c\,d\right)^{1/2}\right)\,\max\left\{ L_{0}\,d\,\sigma^{2},\,L_{1}\,d\,\sigma\right\} .
\]
Consequently, a sufficient proposal scale is 
\[
\sigma=\eta\,\min\left\{ \left(L_{0}\,d\right)^{-1/2},\left(L_{1}\,d\right)^{-1}\right\} ,
\]
where $\eta>0$ is a sufficiently small numerical constant depending
only on $c$. Indeed, for $\eta\leq1$, $L_{0}\,d\,\sigma^{2}\leq\eta^{2}$,
$L_{1}\,d\,\sigma\leq\eta$, and $L_{1}\,\sigma\,\left(c\,d\right)^{1/2}\leq\eta\,\left(c/d\right)^{1/2}\leq\eta\,c^{1/2}$,
whereby $\mathcal{K}^{\left(c\right)}_{d,\sigma}\leq c\,\varphi\left(\eta\,c^{1/2}\right)\,\eta$.
Since $\varphi$ is increasing on $\left[0,\infty\right)$ and $\varphi\left(1\right)=\exp\left(1\right)-2\in\left[1/2,3/4\right]$,
the explicit choice
\[
0<\eta\leq\eta_{c}:=\min\left\{ c^{-1/2},\frac{1}{4\,c\,\left(\exp\left(1\right)-2\right)}\right\} ,
\]
guarantees that $\mathcal{K}^{\left(c\right)}_{d,\sigma}\leq1/4$. 

As a concrete example, consider $U\left(x\right)=\cosh\left(\left\Vert x\right\Vert \right)$,
which satisfies that
\[
\left\Vert \nabla^{2}U\left(x\right)\right\Vert _{\mathrm{op}}=\left\{ 1+\left\Vert \nabla U\left(x\right)\right\Vert ^{2}\right\} ^{1/2}\leq1+\left\Vert \nabla U\left(x\right)\right\Vert ,
\]
i.e. $\left(1,1\right)$-smoothness. The proposal scale $\sigma=\eta/d$
therefore gives a uniform positive acceptance probability for every
$0<\eta\leq\eta_{c}$, with $\eta_{c}$ as above. A more fine-grained
analysis shows that this is close to sharp; one can attain stable
acceptance probabilities for $\sigma\asymp\left(\log d\right)/d$,
but not for $\sigma\sim d^{-\left(1-\epsilon\right)}$ for any positive
$\epsilon$. 

This example lies far outside the setting of a globally continuous
gradient with a translation-invariant modulus, as $U$ and its derivatives
grow exponentially along rays. Nevertheless, the inverse proposal
scale required by the present argument remains only polynomial in
$d$. Since $U$ is $1$-strongly convex, standard isoperimetric estimates
can then be combined with the acceptance bound to yield polynomial
mixing guarantees. This exponential derivative growth is far from
fictional and occurs in various statistical models involving, e.g.,
Poisson likelihoods in their natural parameterization.

\subsection{Bounded-force, nonconvex perturbations}

The curvature-force condition is stable under bounded and smooth perturbations
of $\nabla U$. This provides a simple way to construct explicit nonconvex
examples.

Suppose that $V$ is $G$-smooth and that $W\in C^{2}\left(\mathbf{R}^{d}\right)$
satisfies
\[
\sup_{x}\left\Vert \nabla W\left(x\right)\right\Vert \leq B_{0},\qquad\sup_{x}\left\Vert \nabla^{2}W\left(x\right)\right\Vert _{\mathrm{op}}\leq B_{1}.
\]
For $U=V+W$, we have $\left\Vert \nabla V\left(x\right)\right\Vert \leq\left\Vert \nabla U\left(x\right)\right\Vert +B_{0}$
and hence 
\[
\begin{aligned}\left\Vert \nabla^{2}U\left(x\right)\right\Vert _{\mathrm{op}} & \leq\left\Vert \nabla^{2}V\left(x\right)\right\Vert _{\mathrm{op}}+\left\Vert \nabla^{2}W\left(x\right)\right\Vert _{\mathrm{op}}\\
 & \leq G\left(\left\Vert \nabla V\left(x\right)\right\Vert \right)+B_{1}\\
 & \leq G\left(\left\Vert \nabla U\left(x\right)\right\Vert +B_{0}\right)+B_{1}.
\end{aligned}
\]
Thus $U$ has curvature-force profile $\widetilde{G}\left(s\right)=B_{1}+G\left(s+B_{0}\right)$.
If $G$ is nondecreasing and concave, then so is $\widetilde{G}$.
In the affine case $G\left(s\right)=L_{0}+L_{1}\,s$, this reduces
to $\widetilde{G}\left(s\right)=\widetilde{L}_{0}+L_{1}\,s$ with
$\widetilde{L}_{0}=L_{0}+B_{1}+L_{1}\,B_{0}$.

For a concrete example, let $p>2$, let $v\in\mathbf{R}^{d}$ be a
unit vector, let $a>0$, and consider $U\left(x\right)=p^{-1}\left\Vert x\right\Vert ^{p}+a\,\cos\left(\langle v,x\rangle\right)$.
The oscillatory perturbation satisfies $\left\Vert \nabla W\left(x\right)\right\Vert \leq a$,
$\left\Vert \nabla^{2}W\left(x\right)\right\Vert _{\mathrm{op}}\leq a$.
Moreover, the Hessian of $U$ at the origin has eigenvalue $-a$ in
the direction $v$, so $U$ is nonconvex for every $a>0$.

With $\beta=(p-2)/(p-1)$, the preceding perturbation argument gives
that
\[
\left\Vert \nabla^{2}U\left(x\right)\right\Vert _{\mathrm{op}}\leq a+\left(p-1\right)\,\left(\left\Vert \nabla U\left(x\right)\right\Vert +a\right)^{\beta}.
\]
 Using the subadditivity of $s\mapsto s^{\beta}$, 
\[
\left\Vert \nabla^{2}U\left(x\right)\right\Vert _{\mathrm{op}}\leq\left(a+\left(p-1\right)\,a^{\beta}\right)+\left(p-1\right)\,\left\Vert \nabla U\left(x\right)\right\Vert ^{\beta}.
\]
 The nonconvex perturbation therefore has the same curvature-force
exponent as the unperturbed power potential. Set $L_{0,p,a}:=a+\left(p-1\right)\,a^{\beta}$,
$L_{\beta,p}:=p-1$. For sufficiently small $\eta_{p,c}>0$, take
\[
\sigma=\eta_{p,c}\,\min\left\{ \left(L_{0,p,a}\,d\right)^{-1/2},\left(L_{\beta,p}\,d\right)^{-(p-1)/p}\right\} .
\]
This gives a uniform positive acceptance probability. For fixed $p$
and $a$, this scale is again of order $d^{-(p-1)/p}$.

These examples also illustrate the separation between the local acceptance
analysis and the global geometry of the target. Curvature growth controls
the RWM acceptance probability, while convexity, isoperimetry, or
other assumptions needed to deduce mixing may be studied independently.
The contrast between the radial and product power examples also shows
where the general curvature-{}-force argument loses sharpness: it
controls the worst direction through the operator norm and therefore
cannot exploit the favourable effects of averaging across coordinates.

\section{Discussion}\label{sec:discussion}

The present results establish that steep potentials need not pose
a serious obstruction to the mixing of local, gradient-free Markov
chain Monte Carlo algorithms like the random-walk Metropolis. Some
natural refinements and extensions remain open.

First, while the curvature-{}-force profile is easy to work with and
supplies bounds which behave acceptably in all studied cases, it is
nevertheless far from sharp in some of these examples. This is due
in large part to the choice to summarise the local curvature of $U$
through the operator norm of its Hessian matrix, which obscures the
favourable averaging effects which are present in the definition of
$\alpha_{\sigma}\left(x\right)$. Some cursory calculations suggest
that imposing a similar gradient-domination assumption on the \emph{nuclear
norm} of $\nabla^{2}U\left(x\right)$ is likely to provide improved
dimension-dependence in these cases; this is left for future work.

Secondly, an exciting direction for future work would be to demonstrate
similar robustness for carefully designed first-order methods. Generalized
smoothness conditions such as $\left(L_{0},L_{1}\right)$-smoothness
first arose in the analysis of optimization methods motivated by gradient
clipping \citep{ZhangHSJ20}. It is natural to ask whether they might
also help analyse related sampling methods, including truncated, tamed
and implicit Langevin algorithms \citep{roberts1996exponential,atchade2006adaptive,brosse2019tamed,hodgkinson2021implicit},
the Barker proposal \citep{livingstone2022barker}, and other robust
MCMC strategies \citep{power2025some}. 

\section*{Declaration of the use of generative AI and AI-assisted technologies}

During the preparation of this work the author used OpenAI ChatGPT
and OpenAI Codex in order to identify useful smoothness assumptions,
check mathematical derivations, and assist with drafting and editing.
In particular, the use of curvature-{}-force profiles was identified
as the pivot for this analysis after ChatGPT suggested an assumption
which was more-or-less equivalent to the $\left(L_{0},L_{1}\right)$-smoothness
assumption of \citet{ZhangHSJ20}, which the author identified and
subsequently refined. After using these tools, the author reviewed
and edited the content as necessary and takes full responsibility
for the content of the publication.

\section*{Acknowledgement}

I thank Christophe Andrieu and Anthony Lee for productive discussions,
which motivated an expanded discussion of the sharpness of the results
herein, and a fuller discussion of the implications of these results
for mixing time estimates.

\bibliographystyle{plainnat}
\bibliography{steep_rwm}

\end{document}